\numberwithin{equation}{section}
\theoremstyle{plain}
\newtheorem{Th}{Theorem}%[section]
\newtheorem{Lemma}[Th]{Lemma}
 \theoremstyle{definition}
\newtheorem{?}[Th]{Problem}
\newcommand{\R}{\mathbb{R}}
\newcommand{\al}{\alpha}
\newcommand{\be}{\beta}
\newcommand{\Z}{\mathbb{Z}}
\begin{document}

\title{Minkowski's inequality and sums of squares}

\author[P. E. Frenkel]{P\'eter E. Frenkel}

\address{E\"{o}tv\"{o}s Lor\'{a}nd University \\ Department of Algebra and Number Theory
 \\ H-1117 Budapest
 \\ P\'{a}zm\'{a}ny P\'{e}ter s\'{e}t\'{a}ny 1/C \\ Hungary}
\email{frenkelp@cs.elte.hu}

\author[P. Horv\'ath]{P\'{e}ter Horv\'{a}th}

\email{horvath.peter17@gmail.com}
\thanks{}

\thanks{The first author's research is partially supported by MTA R\'enyi
"Lend\"ulet" Groups and Graphs Research Group.}

 \subjclass[2010]{26D05}

 \keywords{}

\begin{abstract} Positive polynomials arising from Muirhead's inequality,  from classical power mean and elementary symmetric mean  inequalities and from Minkowski's inequality can be rewritten as sums of squares.
\end{abstract}

\maketitle

\section{Introduction}
Many of the most important inequalities in mathematics are, or can be reformulated as, algebraic inequalities. An algebraic inequality is one that asserts that some given polynomial is nonnegative everywhere (or nonnegative on some specified set).

A  polynomial $f\in\R[x_1,\dots,x_n]$ that is nonnegative everywhere is not necessarily a  sum of  squares (of polynomials).  This fact was conjectured by Minkowski and proved by Hilbert. The simplest known counterexample has been given by Motzkin. However, if the inequality $f\ge 0$ is `classical' and `famous' enough, then $f$ usually turns out to be representable as a sum of squares, although such a  representation is not always easy to find.  For example, the most standard proof of the Cauchy--Schwarz inequality is \it not \rm the one that rewrites the difference of the two sides as a sum of squares, but such a  rewriting  is  possible (and almost as well known). More interestingly, the inequality between the arithmetic and the geometric mean also has such a proof, as was demostrated by Hurwitz \cite{H} in 1891.  The paper of Fujisawa \cite{F} gives numerous further examples of this phenomenon. 

Such a  purely algebraic proof of an algebraic inequality, even if it is not the simplest proof,  gives some extra understanding of why the inequality `must be true'. % Moreover, it can lead to a generalization of the inequality, as in example~(\ref{Maclyapunov}) below.

 In the present note, we  give  square sum decompositions of positive polynomials arising from the   inequalities listed below. In each of these, the variables $x_i$, $X_i$, $Y_i$ are meant to be \emph{nonnegative} reals. 
\begin{itemize}
\item The inequality
\begin{equation}\label{power}\sqrt[q]{\frac1n\sum_{i=1}^{n}x_i^q} \leq\sqrt[p]{\frac1n\sum_{i=1}^{n}x_i^p} \end{equation} between power means. This holds for any real exponents  $p\ge q>0$, and can be rewritten as an algebraic inequality when $p$ and $q$ are integers.

\item The more general inequality of Lyapunov:
\begin{equation}\label{Lyap}\left({%\frac1n
\sum_{i=1}^{n}x_i^q}\right)^{p-r} \leq\left({%\frac1n
\sum_{i=1}^{n}x_i^p}\right)^{q-r}\left({%\frac1n
\sum_{i=1}^{n}x_i^r}\right)^{p-q} .\end{equation}  This holds for any real exponents $p\ge q\ge r\ge  0$, and is an algebraic inequality when $p$, $q$ and $r$ are integers. Note that  the special case  $r=0$ is the preceding power mean inequality.

\item Maclaurin's inequality
\begin{equation}\label{Macl}
\sqrt[q]
{\binom{n}{q}^{-1}\sum_{1\le i_1<\dots <i_q\le n} x_{i_1}\cdots x_{i_q}}
 \geq
\sqrt[p]{\binom{n}{p}^{-1}\sum_{1\le i_1<\dots <i_p\le n}x_{i_1}\cdots x_{i_p}} \end{equation} between elementary symmetric means.  This holds for integers  $n\ge p\ge q\ge 1$, and can be rewritten as an algebraic inequality. Note that the special case  $q=1$, $p=n$ is the inequality between the arithmetic and the geometric mean.

\item A Lyapunov type generalization of Maclaurin's inequality:
\begin{equation}\label{Maclyapunov}
\left(
\frac{\sum_{ i_1<\dots <i_q} x_{i_1}\cdots x_{i_q}}{\binom nq}\right)^{p-r} 
 \geq
\left(\frac{\sum_{ i_1<\dots <i_p}x_{i_1}\cdots x_{i_p}}{\binom np}\right)^{q-r} \left(\frac{\sum_{ i_1<\dots <i_r}x_{i_1}\cdots x_{i_r}}{\binom nr}\right)^{p-q}\end{equation}
 for the elementary symmetric means.  This holds for integers  $n\ge p\ge q\ge r\ge 0$, and is an algebraic inequality.  Note that the special case  $r=0$ is Maclaurin's inequality, and the special case $r=q-1$,  $p=q+1$ is Newton's well-known inequality. This latter special case is easily seen to imply the general case.
%This inequality seems to be new, at least we could not find it in the literature.

\item 
Minkowski's inequality (superadditivity of the geometric mean):
\begin{equation}\label{Mink}\sqrt[n]{\prod_{i=1}^{n}X_i} + \sqrt[n]{\prod_{i=1}^{n}Y_i} \leq \sqrt[n]{\prod_{i=1}^n(X_i+Y_i)}.\end{equation}

\end{itemize}

The treatment of all of these will rely on rewriting \it Muirhead's inequality \rm as a sum of squares (see  Lemma~\ref{Muir}). This inequality is more technical and so we postpone it to Section~\ref{Means}.

\section{Nonnegative variables}
Classical inequalities often involve \it nonnegative \rm real variables as opposed to real variables. Note that all of our examples above have been stated for nonnegative variables, although in some special cases the nonnegativity assumption can be dropped.

In the setting of nonnegative variables, the suitable analog of  the semiring of sums of squares is  the semiring \[S=S_n=\left\{\sum_{\varepsilon_1=0}^{1} \ldots \sum_{\varepsilon_n=0}^{1}r_{\underline{\varepsilon}} \prod_{j=1}^nx_j^{\varepsilon_j}  \;|\;r_{\underline{\varepsilon}}\textrm{ is a sum of squares in }\R[x_1,\dots,x_n]\right\}.
\]  It is immediately seen that $S$ is indeed a  semiring, i.e., it is closed under addition and multiplication.  In fact, $S$ is the semiring generated by the variables $x_1$, \dots, $x_n$ and by the squares of all polynomials. 

Note that $p\in S$ implies that $p$ is nonnegative for $x_1,\dots, x_n\ge 0$, but not conversely.
Clearly, $p$ is nonnegative for $x_1,\dots, x_n\ge 0$ if and only if  $ p(x_1^2, \ldots, x_n^2)$ is nonnegative everywhere. The relevance of the semiring $S$ is explained by the following Lemma.

\begin{Lemma}
Let $p \in \mathbb{R}[x_1, \ldots, x_n]$. Then
$p \in S $ if and only if $ p(x_1^2, \ldots, x_n^2)$ is a sum of squares in $\mathbb{R}[x_1, \ldots, x_n]$.
\end{Lemma}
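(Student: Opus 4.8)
The plan is to phrase everything in terms of the substitution homomorphism $\sigma\colon\R[x_1,\dots,x_n]\to\R[x_1,\dots,x_n]$ given by $\sigma(f)=f(x_1^2,\dots,x_n^2)$, and to exploit the action of the sign-change group $G=\{\pm1\}^n$, under which $\sigma(f)$ is always invariant. Note first that $\sigma$ is an injective ring homomorphism, since it carries distinct monomials to distinct monomials. The forward implication is the routine one: if $p=\sum_{\underline{\varepsilon}} r_{\underline{\varepsilon}}\prod_j x_j^{\varepsilon_j}$ with each $r_{\underline{\varepsilon}}$ a sum of squares, then applying $\sigma$ gives $\sigma(p)=\sum_{\underline{\varepsilon}}\sigma(r_{\underline{\varepsilon}})\prod_j x_j^{2\varepsilon_j}$; here $\sigma(r_{\underline{\varepsilon}})$ is again a sum of squares and $\prod_j x_j^{2\varepsilon_j}=(\prod_j x_j^{\varepsilon_j})^2$, so each summand, and hence $\sigma(p)$, is a sum of squares.

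The reverse implication is the substantial one. Suppose $\sigma(p)=\sum_k g_k^2$. The key is to exploit the $G$-invariance of the left-hand side. I would first decompose each $g_k$ according to the parity of its exponents, writing $g_k=\sum_{\underline{\varepsilon}}\big(\prod_j x_j^{\varepsilon_j}\big)\,h_{k,\underline{\varepsilon}}(x_1^2,\dots,x_n^2)$, where the sum runs over $\underline{\varepsilon}\in\{0,1\}^n$ and $h_{k,\underline{\varepsilon}}$ collects the monomials of $g_k$ whose exponent vector has parity $\underline{\varepsilon}$; after factoring out $\prod_j x_j^{\varepsilon_j}$ the remaining exponents are even, so $h_{k,\underline{\varepsilon}}$ is indeed a polynomial in the $x_j^2$.

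Next I would apply the averaging projection $\pi(f)=2^{-n}\sum_{\delta\in G} f(\delta_1 x_1,\dots,\delta_n x_n)$ onto the $G$-invariants, i.e., onto the monomials with all exponents even. Since $\sigma(p)$ is $G$-invariant and $\pi$ is linear, $\sigma(p)=\pi(\sigma(p))=\sum_k\pi(g_k^2)$. Expanding $g_k^2$ and observing that the cross terms indexed by $\underline{\varepsilon}\ne\underline{\varepsilon}'$ contribute monomials with some odd exponent (hence are killed by $\pi$), only the diagonal survives, giving $\pi(g_k^2)=\sum_{\underline{\varepsilon}}(\prod_j x_j^{2\varepsilon_j})h_{k,\underline{\varepsilon}}^2(x_1^2,\dots,x_n^2)$. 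Summing over $k$ and setting $R_{\underline{\varepsilon}}=\sum_k h_{k,\underline{\varepsilon}}^2$, which is a sum of squares, I obtain $\sigma(p)=\sum_{\underline{\varepsilon}}\sigma\big((\prod_j x_j^{\varepsilon_j})R_{\underline{\varepsilon}}\big)=\sigma\big(\sum_{\underline{\varepsilon}}(\prod_j x_j^{\varepsilon_j})R_{\underline{\varepsilon}}\big)$. By injectivity of $\sigma$ this forces $p=\sum_{\underline{\varepsilon}}(\prod_j x_j^{\varepsilon_j})R_{\underline{\varepsilon}}$, which exhibits $p\in S$.

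I expect the main obstacle to be the reverse direction, and specifically the realization that an arbitrary sum-of-squares decomposition of $\sigma(p)$ need not respect the semiring structure of $S$. The remedy is to symmetrize using the sign-change group and to record that the invariant projection kills precisely the mixed-parity cross terms, after which injectivity of $\sigma$ lets one descend from $\sigma(p)$ back to $p$.
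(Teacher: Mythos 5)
Your proof is correct and essentially identical to the paper's: both arguments decompose each square root $g_k$ by exponent parity and apply the linear projection onto monomials with all even exponents (your group average $\pi$ over $\{\pm 1\}^n$ is exactly the paper's operator $\mathcal R$), observe that the mixed-parity cross terms are killed, and then descend via the substitution $x_j\mapsto x_j^2$. The only cosmetic difference is that you realize the projection as an averaging over the sign-change group rather than defining it directly on monomials.
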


To appreciate the results proved in later sections of this paper, the trivial `only if' statement will be important. We included the more difficult
`if' statement to make the picture complete.

\begin{proof}
The `only if' part is trivial.
For the `if' part, we consider the  linear operator \rm
\begin{align*}\mathcal{R} : \mathbb{R}[x_1, \ldots, x_n] &\rightarrow \mathbb{R}[x_1^2, \ldots, x_n^2]
%\\ %\forall q \in \mathbb{R}[x_1, \ldots, x_n] \quad (\mathcal{R}q)(x_1, \ldots, x_n) := 
%q& \mapsto \frac{1}{2^n} \sum_{\varepsilon_1=0}^{1} \ldots \sum_{\varepsilon_n=0}^{1} q( (-1)^{\varepsilon_1} x_1, \ldots, (-1)^{\varepsilon_n} x_n). 
\end{align*}
 that maps the monomial $\prod x_j^{k_j}$ to itself if all $k_j$ are even and maps it to zero otherwise.
We assume that  $p(x_1^2, \ldots, x_n^2)$ is  a sum of squares, i.e., there exist polynomials $r_i$ such that
$$q(x_1, \ldots, x_n):=p(x_1^2, \ldots, x_n^2) = \sum_{i=1}^{k} r_i^2(x_1, \ldots, x_n).$$
In $r_i$, we group terms according to the parity of the exponents of $x_1$, \dots, $x_n$. We define the polynomials $r_{i,\underline{\varepsilon}}% \in \mathbb{R}[x_1^2, \ldots, x_n^2]
$ for each $\underline{\varepsilon} = (\varepsilon_1, \ldots, \varepsilon_n)\in\{0,1\}^n$ so that
$$r_i(x_1, \ldots, x_n) = \sum_{\varepsilon_1=0}^{1} \ldots \sum_{\varepsilon_n=0}^{1}  r_{i,\underline{\varepsilon}} (x_1^2, \ldots, x_n^2)\prod_{j=1}^n x_j^{\varepsilon_j}. $$
Apply  $\mathcal{R}$ to $r_i^2$, then
$$(\mathcal{R}r_i^2)(x_1, \ldots, x_n) = \sum_{\varepsilon_1=0}^{1} \ldots \sum_{\varepsilon_n=0}^{1}  r_{i,\underline{\varepsilon}}^2 (x_1^2, \ldots, x_n^2)\prod_{j=1}^n x_j^{2\varepsilon_j}.$$
Hence,
$$p(x_1^2, \ldots, x_n^2) = q(x_1, \ldots, x_n) = (\mathcal{R}q)(x_1, \ldots, x_n) =$$ $$=\sum_{i=1}^{k} (\mathcal{R}r_i^2)(x_1, \ldots, x_n) =  \sum_{i=1}^{k} \sum_{\varepsilon_1=0}^{1} \ldots \sum_{\varepsilon_n=0}^{1} r_{i,\underline{\varepsilon}}^2 (x_1^2, \ldots, x_n^2)\prod_{j=1}^n  x_j^{2\varepsilon_j}. $$
\\
Therefore, \[\displaystyle p(x_1, \ldots, x_n) = \sum_{i=1}^{k} \sum_{\varepsilon_1=0}^{1} \ldots \sum_{\varepsilon_n=0}^{1}  r_{i,\underline{\varepsilon}}^2 (x_1, \ldots, x_n)\prod_{j=1}^n x_j^{\varepsilon_j},\] whence $p \in S$.
\end{proof}

\bigskip

\section{Means}\label{Means}
We now wish to generalize a few results of Fujisawa \cite{F} concerning power mean and elementary symmetric mean inequalities.

Fix a  nonnegative integer $d$, the degree of the homogeneous polynomials we will be looking at.
Let us consider the set of \it   partitions \rm  of  $d$ into at most $n$ parts (such a partition is a  weakly decreasing $n$-term sequence of nonnegative integers adding up to $d$). There is a standard partial order on this set. First of all, we write $\alpha\searrow\beta$ if, for some indices $k<l$, we have $\beta_k=\alpha_k-1$, $\beta_l=\alpha_l+1$  and $\beta_i=\alpha_i$ for $i\ne k,l$. Then, we define the partial order $\succeq$ to be the reflexive transitive closure of $\searrow$.  I.e., $\alpha\succeq\beta$ if and only if there exists an $N\ge 0$ and a sequence of partitions $\alpha=\alpha_0\searrow\alpha_1\searrow\dots\searrow \alpha_N=\beta$.  We write $\alpha\succ\beta$ if $\alpha\succeq\beta$ and $\alpha\ne\beta$. We mention, but will not make use of, the well-known fact that $\alpha\succeq\beta$
holds if and only if $\alpha_1+\dots +\alpha_k\ge \beta_1+\dots +\beta_k$ for all $k$.

We now introduce  the \it Reynolds operator \rm $\mathcal R$ of the symmetric group $\mathfrak S_n$. For  a polynomial $f\in \R[x_1,\dots, x_n]$, let \[(\mathcal Rf)(x_1,\dots, x_n):=\displaystyle \frac 1{n!}\sum_{\sigma \in \mathfrak S_n} f(x_{\sigma (1)}, \ldots, x_{\sigma (n)}).\]  We introduce the monomial $x^\alpha=\prod x_j^{\alpha_j}$ and define the normalized monomial symmetric function $[\alpha]=\mathcal Rx^\alpha$.

We have

\bigskip

\bf Muirhead's inequality. \rm  If the  partitions $\al$ and $\be$ satisfy $\alpha\succeq\beta$, then 
  $[\alpha](x_1,\dots,x_n)\ge  [\beta] (x_1,\dots, x_n) $ for all nonnegative $x_1$, \dots, $x_n$.

\bigskip

The following lemma will be crucial for the sequel.

\begin{Lemma}[Muirhead's inequality rewritten as sums of squares]\label{Muir} If the  partitions $\al$ and $\be$ satisfy $\alpha\succeq\beta$, then 
  $[\alpha] - [\beta] \in S$.
\end{Lemma}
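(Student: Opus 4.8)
The plan is to reduce the whole statement to a single covering step of the partial order, and then to a two-variable computation. Since $\succeq$ is by definition the reflexive transitive closure of $\searrow$, any relation $\alpha\succeq\beta$ is witnessed by a chain of partitions $\alpha=\alpha_0\searrow\alpha_1\searrow\dots\searrow\alpha_N=\beta$, and the difference telescopes:
\[
[\alpha]-[\beta]=\sum_{j=0}^{N-1}\bigl([\alpha_j]-[\alpha_{j+1}]\bigr).
\]
Because $S$ is closed under addition, it will suffice to prove $[\alpha]-[\beta]\in S$ whenever $\alpha\searrow\beta$ is a single covering step. So I fix such a step: there are indices $k<l$ with $\beta_k=\alpha_k-1$, $\beta_l=\alpha_l+1$ and $\beta_i=\alpha_i$ otherwise. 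Since $\mathcal R$, and hence $[\,\cdot\,]$, is unaffected by relabelling the variables, I may assume $k=1$, $l=2$. Writing $a:=\alpha_1$, $b:=\alpha_2$ and $M:=\prod_{i\ge 3}x_i^{\alpha_i}$, I then have $[\alpha]-[\beta]=\mathcal R(f)$ with $f=M\,(x_1^a x_2^b-x_1^{a-1}x_2^{b+1})$. Note that, $\beta$ being weakly decreasing and $1<2$, we get $\beta_1\ge\beta_2$, i.e. $a\ge b+2$; in particular $a>b$.

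Before the main computation I record two elementary facts about $S$. First, every polynomial with nonnegative coefficients lies in $S$: the variables and the nonnegative constants (being squares) are in $S$, and $S$ is closed under the two semiring operations. Second, $S$ is invariant under the action of $\mathfrak S_n$, since its generators — the variables and the squares — are permuted among themselves (a permuted square is again a square). Consequently $\mathcal R(S)\subseteq S$, and moreover $\mathcal R(\tau g)=\mathcal R(g)$ for every polynomial $g$ and every $\tau\in\mathfrak S_n$ (reindex the averaging sum by $\sigma\mapsto\sigma\tau$).

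The heart of the matter is the two-variable kernel identity. With the transposition $\tau:=(1\,2)$, and using $\tau M=M$, one computes
\[
f+\tau f=M\bigl[(x_1^a x_2^b+x_1^b x_2^a)-(x_1^{a-1}x_2^{b+1}+x_1^{b+1}x_2^{a-1})\bigr]
=M\,(x_1 x_2)^b(x_1-x_2)(x_1^{a-b-1}-x_2^{a-b-1}).
\]
Since $a-b\ge 2$, the last difference factors as $(x_1-x_2)\sum_{i=0}^{a-b-2}x_1^i x_2^{\,a-b-2-i}$, so that
\[
f+\tau f=(x_1-x_2)^2\cdot M\,(x_1 x_2)^b\sum_{i=0}^{a-b-2}x_1^i x_2^{\,a-b-2-i}.
\]
Here the first factor is a square and the second has nonnegative coefficients, hence $f+\tau f\in S$. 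Combining this with the facts above,
\[
[\alpha]-[\beta]=\mathcal R(f)=\tfrac12\,\mathcal R(f+\tau f)\in\mathcal R(S)\subseteq S,
\]
which settles the single-step case and therefore the lemma.

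The one genuinely non-obvious point — and the step I expect to be the main obstacle — is recognizing that one must \emph{not} try to work with $x^\alpha-x^\beta$ directly, since that difference equals a monomial times $(x_1-x_2)$ and is not in $S$; one has to symmetrize first over the transposition $\tau$. It is precisely the $\tau$-symmetric combination $f+\tau f$ that acquires the factor $(x_1-x_2)^2$ and thereby becomes a product of a square and a polynomial with nonnegative coefficients. Everything else is bookkeeping: the telescoping along $\searrow$-chains, the $\mathfrak S_n$-invariance of $S$, and the elementary closure properties of the semiring.
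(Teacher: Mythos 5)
Your proof is correct and follows essentially the same route as the paper: reduce to a single covering step $\alpha\searrow\beta$, symmetrize over the transposition of the two affected indices, and factor the resulting four-term expression as $(x_k-x_l)^2$ times a polynomial with nonnegative coefficients. Your version merely makes explicit two points the paper leaves implicit (that $\mathcal R$ maps $S$ into $S$, and that $\mathcal Rf=\tfrac12\mathcal R(f+\tau f)$), and your final factorization agrees term by term with the one in the paper.
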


\begin{proof} Since $\succeq$ is the reflexive transitive closure of $\searrow$, and $S$ is closed under addition, we may assume that $\alpha\searrow\beta$.  Let $k<l$ be as in the definition of $\searrow$.
% $ Q_j(x_1,x_2)= x_1^{\al_1-j}x_2^{\al_2+j} + x_1^{\al_2+j}x_2^{\al_1-j} $ 
Then $
[\alpha] - [\beta]=$  \begin{align*}&= \frac{1}{2 } \mathcal R\left(  \left( x_k^{\al_k}x_l^{\al_l} + x_k^{\al_l}x_l^{\al_k} -
x_k^{\al_k-1}  x_l^{\al_l+1} -
x_k^{\al_l+1}  x_l^{\al_k-1} \right)\prod_{i\ne k,l}  x_i^{\alpha_i}\right) .
\end{align*} The four-term expression in the inner parantheses equals
$$ = (x_k-x_l)^2\left(x_k^{\al_k-2}x_l^{\al_l}+x_k^{\al_k-3}x_l^{\al_l+1} + \ldots +
x_k^{\al_l+1}x_l^{\al_k-3} + x_k^{\al_l}x_l^{\al_k-2}\right).$$ This is in $S$, therefore so is the whole expression. 
\end{proof}

We now turn to \it  normalized power sums \rm
\[P_k=[k,0,\dots, 0]=\frac1n\sum_{i=1}^nx_i^k\] and \it normalized elementary symmetric polynomials \rm
\[E_k=[\underbrace{1,\ldots, 1}_k, 0, \ldots, 0]=\binom nk^{-1}\sum_{i_1<\dots< i_k}x_{i_1}\cdots x_{i_k}.\] 
Note that $P_0=E_0=1$.
\begin{Lemma}
For $1\le i\le k$, we have \[P_{i-1}P_{k+1}-P_{i}P_{k}\in S\qquad \text{ and }\qquad 
E_{i}E_{k}-E_{i-1}E_{k+1}\in S.\]
\end{Lemma}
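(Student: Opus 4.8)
The plan is to prove the two memberships separately; the first drops out of Muirhead's lemma at once, whereas the second needs an extra summation-by-parts step. \emph{Power sums.} Expanding $P_mP_l=\frac1{n^2}\sum_{a,b}x_a^mx_b^l$ and splitting off the diagonal $a=b$, together with $\sum_{a\ne b}x_a^mx_b^l=n(n-1)[\max(m,l),\min(m,l)]$, I get
\[P_mP_l=\tfrac1nP_{m+l}+\tfrac{n-1}n[\max(m,l),\min(m,l)]\qquad(m,l\ge0),\]
with the convention $[c,0]=P_c$. Applying this to $P_{i-1}P_{k+1}$ and to $P_iP_k$, the diagonal parts $\tfrac1nP_{i+k}$ cancel and leave
\[P_{i-1}P_{k+1}-P_iP_k=\tfrac{n-1}n\bigl([k+1,i-1]-[k,i]\bigr).\]
Because $1\le i\le k$, the partition $(k+1,i-1)$ passes to $(k,i)$ by a single $\searrow$-step, so $(k+1,i-1)\succeq(k,i)$ and Lemma~\ref{Muir} gives $P_{i-1}P_{k+1}-P_iP_k\in S$.

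\emph{Elementary symmetric polynomials.} (We may assume $k+1\le n$, the other case being trivial.) I would expand both products in the basis of the $[\,\cdot\,]$. Reading $E_iE_k$ as the average of $x_Ax_B$ over an independent uniformly random $i$-set $A$ and $k$-set $B$, and grouping by $s=|A\cap B|$, the $\mathfrak S_n$-symmetry shows that, given $|A\cap B|=s$, the monomial $x_Ax_B$ is the unbiased average of all monomials of shape $(2^s1^{d-2s})$ (the partition of $d=i+k$ with $s$ twos and $d-2s$ ones); that average is exactly $[2^s1^{d-2s}]$. Hence
\[E_iE_k=\sum_s a_s\,[2^s1^{d-2s}],\qquad E_{i-1}E_{k+1}=\sum_s b_s\,[2^s1^{d-2s}],\]
where $a_s=\binom is\binom{n-i}{k-s}/\binom nk$ and $b_s=\binom{i-1}s\binom{n-i+1}{k+1-s}/\binom n{k+1}$ are the hypergeometric probabilities that a random $k$-set (resp. $(k+1)$-set) meets a fixed $i$-set (resp. $(i-1)$-set) in $s$ points. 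In particular $\sum_s a_s=\sum_s b_s=1$, so the coefficients of the difference sum to $0$.

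The partitions $(2^s1^{d-2s})$ form a $\succeq$-chain in which $\phi_s:=[2^s1^{d-2s}]$ dominates $\phi_{s-1}$ by one $\searrow$-step, so $\phi_s-\phi_{s-1}\in S$ by Lemma~\ref{Muir}. Since $\sum_s(a_s-b_s)=0$, summation by parts gives
\[E_iE_k-E_{i-1}E_{k+1}=\sum_j\Gamma_j\,(\phi_j-\phi_{j-1}),\qquad \Gamma_j=\sum_{s\ge j}(a_s-b_s),\]
and as $S$ is closed under addition and under scaling by nonnegative reals, it remains only to prove $\Gamma_j\ge0$ for all $j$. This is the crux. I would argue by a monotone likelihood ratio: a direct computation gives
\[\frac{a_s}{b_s}=\frac{n-k}{k+1}\cdot\frac{i}{n-i+1}\cdot\frac{k+1-s}{i-s},\]
whose last factor $\frac{k+1-s}{i-s}=1+\frac{k+1-i}{i-s}$ is increasing in $s$ since $k+1-i\ge1$. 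Therefore $a_s-b_s$ changes sign at most once as $s$ increases, from nonpositive to nonnegative; combined with $\sum_s(a_s-b_s)=0$ this forces every tail sum $\Gamma_j$ to be nonnegative (equivalently, $(a_s)$ stochastically dominates $(b_s)$). Hence $E_iE_k-E_{i-1}E_{k+1}\in S$. The whole difficulty sits in this sign-change/stochastic-dominance step; everything else is routine bookkeeping with Muirhead's lemma.
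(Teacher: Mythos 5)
Your proof is correct, and its skeleton coincides with the paper's: the power-sum identity $P_{i-1}P_{k+1}-P_iP_k=\frac{n-1}{n}\left([k+1,i-1]-[k,i]\right)$ is exactly the paper's computation, and for the elementary symmetric part you use the same hypergeometric expansion $E_iE_k=\sum_s a_s[2^s1^{i+k-2s}]$ (your $a_s=\binom{i}{s}\binom{n-i}{k-s}/\binom{n}{k}$ equals the paper's $\binom{n}{i}^{-1}\binom{k}{r}\binom{n-k}{i-r}$ by hypergeometric symmetry) followed by the same telescoping against the Muirhead differences $[\beta_{(r+1)}]-[\beta_{(r)}]$. The one place you genuinely diverge is the crux you correctly identify: showing the telescoped coefficients are nonnegative. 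The paper guesses and verifies a closed form, $a_r=\binom{n}{i}^{-1}\binom{k}{r}\binom{n-1-k}{i-1-r}\frac{k+1-i}{i}$, which is manifestly nonnegative since $i\le k$; your tail sums $\Gamma_{r+1}$ are exactly these $a_r$, but you establish their nonnegativity softly, via the monotone likelihood ratio $a_s/b_s=\frac{n-k}{k+1}\cdot\frac{i}{n-i+1}\cdot\frac{k+1-s}{i-s}$ being increasing in $s$, hence single sign change of $a_s-b_s$ and stochastic dominance. This is a valid and self-contained argument (the ratio computation and the single-crossing implication both check out), and it realizes precisely what the authors allude to when they write that ``it is easy to see without calculation that such nonnegative constants exist.'' What your route gives up is the explicit closed form for the coefficients, and hence the explicit sum-of-squares representation that the paper records at the end of its proof; what it buys is a conceptual reason for nonnegativity that does not depend on guessing a formula.
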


For $i=1$ and for $i=k$, this was shown by Fujisawa in \cite{F}. 

\begin{proof}
We have
\begin{align*}
P_{i-1}P_{k+1}-P_{i}P_{k} =\\= [i-1,0,\ldots,0] \cdot [k+1,0,\ldots,0]-[i,0,\ldots,0] \cdot [k,0,\ldots,0] = \\
= \frac{1}{n} [i+k,0,\ldots,0]+\frac{n-1}{n}[k+1,i-1,0,\ldots,0]+\\-
\frac{1}{n} [i+k,0,\ldots,0] - \frac{n-1}{n} [k,i,0,\ldots,0] = \\
=\frac{n-1}{n} \left( [k+1,i-1,0,\ldots,0] - [k,i,0,\ldots,0] \right).
\end{align*}
Since $(k,i,0,\ldots,0) \prec (k+1,i-1,0,\ldots,0)$, the first statement follows by the previous lemma. Explicitly, we get
\[ P_{i-1}P_{k+1}-P_{i}P_{k}=\frac{n-1}{2n } 
 \mathcal R\left( (x_1-x_2)^2 \sum_{j=i-1}^{k-1} x_1^j x_2^{i+k-2-j} \right)
.\]

For the second statement, put
$\be_{(r)}=(\underbrace{2,\ldots,2}_{r},\underbrace{1,\ldots,1}_{i+k-2r},\underbrace{0,\ldots,0)}_{n-i-k+r}$. Then
$$E_i \cdot E_{k}={\binom{n}{i}^{-1}} \sum_{r=0}^{i} \binom{k}{r} \binom{n-k}{i-r}[\be_{(r)}].$$
%and
%\begin{align*}E_{i-1} \cdot E_{k+1}  = \frac{1}{\binom{n}{i-1} \binom{n}{k+1} }  \sum_{r=0}^{i-1} \binom{i+k-2r}{i-r-1}
 %\frac{n!}{r!(i+k-2r)!(n-i-k+r)!}[\be_{(r)}]. \end{align*} 
For all  $0\leq r \leq i-1$, we have  $\be_{(r)} \prec \be_{(r+1)}$, so $[\be_{(r+1)}]-[\be_{(r)}]$ is in $S$ by the previous lemma. It will suffice to find nonnegative constants $a_r$ such that
$$E_{i}E_{k}-E_{i-1}E_{k+1}=\sum_{r=0}^{i-1} a_r \left([\be_{(r+1)}]-[\be_{(r)}]\right).$$  It is easy to see without calculation that such nonnegative constants exist, but we still do the calculation in order to get an explicit formula.

Put $a_k=a_{-1}=0$, then the coefficient of $[\be_{(r)}]$ will be $a_{r-1}-a_{r}$ for all $r$. Therefore what we need to achieve is
$a_{r-1}-a_r =$ 
\begin{align*}
&= {\binom{n}{i}^{-1}}  \binom{k}{r} \binom{n-k}{i-r}-{\binom{n}{i-1}^{-1}}  \binom{k+1}{r} \binom{n-1-k}{i-1-r}
 = \\
&=  {\binom{n}{i}^{-1}}  \binom{k}{r} \binom{n-1-k}{i-1-r}\left(\frac{n-k}{i-r}-\frac{n+1-i}i\frac{k+1}{k+1-r}\right).
% = \\
%&= \frac{n!}{\binom{n}{i} \binom{n}{k} i(n-k)}  \cdot \frac{\left(r(n+1)-i(k+1)\right)\left(k+1-i\right)}{r!(i-r)!(k-r+1)!(n-i-k+r)!}. \\
\end{align*}
Examining the special cases $r=0$ and $r=1$, we 
%get
%\begin{align*}
%a_0 &= \frac{n!}{\binom{n}{i} \binom{n}{k} i(n-k)} \cdot \frac{k+1-i}{0!(i-1)!k!(n-i-k)!}, \\
%a_1 &= \frac{n!}{\binom{n}{i} \binom{n}{k} i(n-k)} \cdot \frac{k+1-i}{1!(i-2)!(k-1)!(n-i-k+1)!}. \\
%\end{align*}
%We
 are led to conjecture
$$a_r= {\binom{n}{i}^{-1}}  \binom{k}{r} \binom{n-1-k}{i-1-r}\frac{k+1-i}i.$$
An easy calculation shows that this indeed gives the correct $a_{r-1}-a_{r}$.
The statement follows; explicitly,
$$E_{i}E_{k}-E_{i-1}E_{k+1}=  \frac{1}{2 } \sum_{r=0}^{i-1} a_r \mathcal R\left( 
(x_1-x_2)^2 x_3^2 \ldots x_{r+2}^2 x_{r+3} \ldots x_{2k-r}\right).$$
\end{proof}

\begin{Th}[Power mean inequality and Maclaurin's inequality rewritten as sums of squares] For $p\ge q$, we have $P_p^q-P_q^p\in S$ and $E_q^p-E_p^q\in S$. 
\end{Th}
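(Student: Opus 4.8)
The plan is to deduce both statements from a single telescoping argument built on the preceding lemma, exploiting that $S$ is closed under both addition and multiplication and that $P_0=E_0=1$. First I would record the elementary fact that each individual $P_m$ and each $E_m$ already lies in $S$: every monomial $x_i^m$ equals $x_i^{\varepsilon}(x_i^{\lfloor m/2\rfloor})^2$ with $\varepsilon\in\{0,1\}$, and $E_m$ is a nonnegative combination of squarefree monomials, so any finite product of factors $P_m$ or $E_m$ again lies in $S$. This is exactly what will let me multiply a difference by such a product without leaving $S$.

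Next I would equalize the two sides so that they become products of the same number of factors of the same total degree. Since $p\ge q$ and $P_0=1$, I write $P_p^q=P_p^qP_0^{\,p-q}$, a product of $p$ power sums whose exponents form the partition $\lambda=(\underbrace{p,\dots,p}_{q},\underbrace{0,\dots,0}_{p-q})$ of $pq$, whereas $P_q^p$ is the product whose exponents form $\mu=(\underbrace{q,\dots,q}_{p})$. Comparing partial sums (using the characterization of $\succeq$ recalled in Section~\ref{Means}) gives $\lambda\succeq\mu$, so there is a chain $\lambda=\gamma^{(0)}\searrow\gamma^{(1)}\searrow\dots\searrow\gamma^{(N)}=\mu$. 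Writing $\Pi(\gamma)=\prod_j P_{\gamma_j}$, the difference telescopes as $P_p^q-P_q^p=\sum_{t}\bigl(\Pi(\gamma^{(t)})-\Pi(\gamma^{(t+1)})\bigr)$, where for the positions $a<b$ of the $t$-th step each summand factors as $\bigl(P_{\gamma_a}P_{\gamma_b}-P_{\gamma_a-1}P_{\gamma_b+1}\bigr)\prod_{i\ne a,b}P_{\gamma_i}$. The bracket lies in $S$ by the preceding lemma (applied with its index $i$ set to $\gamma_b+1$ and its index $k$ to $\gamma_a-1$), the prefactor lies in $S$ by the first paragraph, and closure under addition then gives $P_p^q-P_q^p\in S$.

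The argument for $E_q^p-E_p^q$ is the mirror image along the same chain. Now $\mu=(q^p)$ is the more balanced exponent partition and $\lambda=(p^q,0^{p-q})$ the more spread one, still with $\lambda\succeq\mu$. Setting $\Pi_E(\gamma)=\prod_j E_{\gamma_j}$, I telescope in the opposite direction, $E_q^p-E_p^q=\sum_t\bigl(\Pi_E(\gamma^{(t+1)})-\Pi_E(\gamma^{(t)})\bigr)$, so that each summand equals $\bigl(E_{\gamma_a-1}E_{\gamma_b+1}-E_{\gamma_a}E_{\gamma_b}\bigr)\prod_{i\ne a,b}E_{\gamma_i}$, whose bracket lies in $S$ by the other half of the preceding lemma. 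Thus $E_q^p-E_p^q\in S$.

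The one point that needs care—and the main pitfall to guard against—is the interplay of sign and the index hypotheses of the preceding lemma. In every $\searrow$ step the decremented part $\gamma_a$ is strictly larger than the incremented part $\gamma_b$ (equality is impossible, since $\gamma^{(t+1)}$ must remain weakly decreasing), so either $\gamma_a=\gamma_b+1$, in which case the two products coincide and the summand vanishes, or $\gamma_a\ge\gamma_b+2$, in which case the lemma applies verbatim and delivers precisely the sign required. It is exactly this that lets the log-convexity of $(P_k)$ drive $P_p^q-P_q^p$ into $S$ while the log-concavity of $(E_k)$ drives $E_q^p-E_p^q$ into $S$.
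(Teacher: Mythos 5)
Your proposal is correct and follows essentially the same route as the paper: both arguments telescope $P_p^qP_0^{p-q}-P_q^p$ (and its $E$-analogue, using $E_0=1$) into a sum of terms, each a two-factor difference supplied by the preceding lemma multiplied by a product of $P$'s or $E$'s lying in $S$. The only difference is organizational: the paper writes down one explicit chain as a double sum over $1\le i\le q\le k\le p-1$ and verifies the resulting identity directly, whereas you invoke an arbitrary $\searrow$-chain realizing $(p,\dots,p,0,\dots,0)\succeq(q,\dots,q)$, which additionally leans on the partial-sum characterization of $\succeq$ that the paper states as well known but deliberately avoids using.
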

\begin{proof} We have
$$P_p^q-P_q^p=P_p^q  P_0^{p-q}- P_q^p=$$ $$= \sum_{i=1}^{q} \sum_{k=q}^{p-1} (P_{i-1}P_{k+1}-P_{i}P_{k})
P_{i-1}^{k-q} P_{i}^{p-(k+1)}  P_{p}^{q-i} P_{q}^{i-1},$$
because the latter expression equals
\begin{eqnarray*}
\sum_{i=1}^{q} P_{p}^{q-i} P_{q}^{i-1} \sum_{k=q}^{p-1} \Big( P_{i-1}^{k+1-q} P_{i}^{p-(k+1)} P_{k+1} -
 P_{i-1}^{k-q} P_{i}^{p-k} P_k \Big) &=&\\
= \sum_{i=1}^{q} P_{p}^{q-i} P_{q}^{i-1} \Big( P_{i-1}^{p-q}P_p - P_i^{p-q}P_q  \Big)   &=&\\
= \sum_{i=1}^{q}\left( P_{i-1}^{p-q} P_p^{q-(i-1)} P_q^{i-1} - P_i^{p-q} P_p^{q-i} P_q^i  \right) &=&\\
 = P_p^q  P_0^{p-q}- P_q^p.
\end{eqnarray*}
\\
We have thus represented $P_p^q-P_q^p$ as an $S$-linear combination of the polynomials $P_{i-1}P_{k+1}-P_{i}P_{k}$  $(1 \leq i \leq q \leq k \leq p-1)$. The first claim now follows from that of the previous lemma.  We omit the proof of the second statement because is it essentially the same.
%Azt kell csak megmutatnunk, hogy $P_{i-1}P_{k+1}-P_{i}P_{k} \in S$.
\end{proof}

\begin{Th}[Lyapunov's inequality and the generalized  Maclaurin inequality rewritten as sums of squares] For $p\ge q\ge r$, we have $P_p^{q-r}P_r^{p-q}-P_q^{p-r}\in S$ and $E_q^{p-r}-E_p^{q-r}E_r^{p-q}\in S$. 
\end{Th}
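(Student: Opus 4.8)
The plan is to deduce both statements from the two-term memberships $P_{i-1}P_{k+1}-P_iP_k\in S$ and $E_iE_k-E_{i-1}E_{k+1}\in S$ established in the preceding Lemma, by a telescoping argument governed by majorization. The point is that these memberships express exactly the \emph{log-convexity} of the sequence $(P_k)$ and the \emph{log-concavity} of the sequence $(E_k)$, of which Lyapunov's inequality and its Maclaurin analogue are precisely the three-point instances.

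First I would record the two index-multisets
\[\lambda=(\underbrace{p,\dots,p}_{q-r},\underbrace{r,\dots,r}_{p-q}),\qquad \mu=(\underbrace{q,\dots,q}_{p-r}),\]
each with $p-r$ parts. Comparing partial sums shows that they have equal total $q(p-r)=p(q-r)+r(p-q)$ and that $\lambda\succeq\mu$ in the majorization order recalled above. Hence there is a chain $\lambda=\lambda_0\searrow\lambda_1\searrow\dots\searrow\lambda_M=\mu$; at each step a single pair of entries $(a,b)$ with $a>b$ is replaced by $(a-1,b+1)$, all other entries being unchanged, and a genuine $\searrow$-step always has $a\ge b+2$. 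Moreover, since each step contracts a pair, every entry of every $\lambda_j$ stays in the range $[r,p]\subseteq[0,n]$.

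Writing $P^{\lambda_j}:=\prod_s P_{(\lambda_j)_s}$, the difference $P^{\lambda}-P^{\mu}=P_p^{q-r}P_r^{p-q}-P_q^{p-r}$ telescopes along the chain as $\sum_{j}\bigl(P^{\lambda_{j}}-P^{\lambda_{j+1}}\bigr)$. In the $j$-th summand only the two factors coming from the modified pair change, so it equals $\bigl(P_aP_b-P_{a-1}P_{b+1}\bigr)$ times the product of the unchanged factors; the first factor lies in $S$ by the relations above (take $i=b+1\le k=a-1$), and the remaining factors are products of the $P_s$, each of which is in $S$, so the whole summand lies in $S$ because $S$ is multiplicatively closed. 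Summing gives $P_p^{q-r}P_r^{p-q}-P_q^{p-r}\in S$. For the elementary symmetric means the very same chain is used, but log-concavity reverses the sign: each step now contributes $E_{a-1}E_{b+1}-E_aE_b=E_iE_k-E_{i-1}E_{k+1}\in S$, and telescoping $E^{\mu}-E^{\lambda}$ yields $E_q^{p-r}-E_p^{q-r}E_r^{p-q}\in S$.

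The only genuinely delicate point is the combinatorial bookkeeping of the chain: one must ensure that each elementary $\searrow$-move changes exactly one pair of indices, so that it is matched by a single instance of the Lemma, and that no index ever leaves $[0,n]$, which for the $E$-case would make some factor vanish and destroy the correspondence. Both are guaranteed by the construction of $\lambda$ and $\mu$ as spreadings of a constant sequence within $[r,p]$. Alternatively one can bypass majorization and write down an explicit iterated-sum identity in the spirit of the previous theorem, telescoping outward from the index $q$ towards $p$ and towards $r$ simultaneously; this produces the same certificate at the cost of a more cumbersome multi-index summation.
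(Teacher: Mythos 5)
Your proof is correct, and it rests on the same key ingredient as the paper --- the two-term memberships $P_{i-1}P_{k+1}-P_iP_k\in S$ and $E_iE_k-E_{i-1}E_{k+1}\in S$ from the preceding Lemma, exploited by telescoping --- but you organize the telescoping differently. The paper writes down one explicit double-sum identity,
\[P_p^{q-r}P_r^{p-q}-P_q^{p-r}= \sum_{i=r+1}^{q} \sum_{k=q}^{p-1} (P_{i-1}P_{k+1}-P_{i}P_{k})\,
P_{i-1}^{k-q} P_{i}^{p-(k+1)}  P_{p}^{q-i} P_{q}^{i-1-r},\]
and verifies it by a direct computation; this has the virtue of producing a single closed-form certificate with no choices involved. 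You instead telescope along a majorization chain joining the subscript multisets $(p^{q-r},r^{p-q})$ and $(q^{p-r})$, which is shorter and more conceptual (it exhibits the statement as the three-point instance of log-convexity of $(P_k)$ and log-concavity of $(E_k)$ along dominance), and it proves the more general fact that $\prod_s P_{\lambda_s}-\prod_s P_{\mu_s}\in S$ and $\prod_s E_{\mu_s}-\prod_s E_{\lambda_s}\in S$ whenever $\lambda\succeq\mu$ are subscript multisets of equal size and sum (with all entries in $[0,n]$ in the $E$-case). The price is that the certificate depends on the chosen chain, and that you invoke the equivalence between the partial-sum description of $\succeq$ and its description as the transitive closure of $\searrow$ --- a standard fact which the paper states but pointedly avoids using; since you only need the nontrivial direction for your very special $\lambda,\mu$, it would be cleaner to exhibit an explicit chain (repeatedly move one unit from a largest entry exceeding $q$ to a smallest entry below $q$) rather than cite the general theorem. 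Your checks that every elementary step has $a\ge b+2$, hence matches an instance of the Lemma with $i=b+1\le k=a-1$, and that all intermediate subscripts stay in $[r,p]\subseteq[0,n]$, are exactly the right points to verify and are handled correctly.
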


\begin{proof} Only a slight modification of the previous proof is needed. We have
$$P_p^{q-r}P_r^{p-q}-P_q^{p-r}= \sum_{i=r+1}^{q} \sum_{k=q}^{p-1} (P_{i-1}P_{k+1}-P_{i}P_{k})
P_{i-1}^{k-q} P_{i}^{p-(k+1)}  P_{p}^{q-i} P_{q}^{i-1-r},$$
because the latter expression equals
\begin{eqnarray*}
 \sum_{i=r+1}^{q} P_{p}^{q-i} P_{q}^{i-1-r} \sum_{k=q}^{p-1} \Big( P_{i-1}^{k+1-q} P_{i}^{p-(k+1)} P_{k+1} -
 P_{i-1}^{k-q} P_{i}^{p-k} P_k \Big) &=&\\
= \sum_{i=r+1}^{q} P_{p}^{q-i} P_{q}^{i-1-r} \Big( P_{i-1}^{p-q}P_p - P_i^{p-q}P_q  \Big)   &=&\\
= \sum_{i=r+1}^{q} \left(P_{i-1}^{p-q} P_p^{q-(i-1)} P_q^{i-1-r} - P_i^{p-q} P_p^{q-i} P_q^{i-r}\right)   &=&\\
 = P_p^{q-r}  P_r^{p-q}- P_q^{p-r}.
\end{eqnarray*}
\\
%We have thus represented $P_p^q-P_q^p$ as an $S$-linear combination of the polynomials $P_{i-1}P_{k+1}-P_{i}P_{k}$  $(1 \leq i \leq q \leq k \leq p-1)$. 
The first claim  follows.  The proof of the second statement  is  essentially the same.
%Azt kell csak megmutatnunk, hogy $P_{i-1}P_{k+1}-P_{i}P_{k} \in S$.
\end{proof}

\section{Minkowski's inequality}
We now come to our main result, which concerns
Minkowski's inequality~(\ref{Mink}).
In this case, we cannot get a polynomial inequality by simply raising both sides to some power. We also need to  substitute $X_i=x_i^n$ and  $Y_i=y_i^n$ to get the $2n$-variable polynomial inequality
 $$P(x_1, \ldots, x_n, y_1, \ldots, y_n) := \prod_{i=1}^n(x_i^n+y_i^n) - \left(\prod_{i=1}^{n}x_i + \prod_{i=1}^{n}y_i\right)^n\ge 0$$ for $x_i,y_i\ge 0$.

\begin{Th}[Minkowski's inequality rewritten as sum of squares] We have $P\in S$ (where $S=S_{2n}$ since we are dealing with $2n$ variables).
\end{Th}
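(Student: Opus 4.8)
The plan is to group the two expanded products by the number of $x$-factors and to recognise each group as an inequality between the arithmetic and geometric means of monomials. Expanding $\prod_{i=1}^n(x_i^n+y_i^n)=\sum_{T\subseteq\{1,\dots,n\}}\prod_{i\in T}x_i^n\prod_{i\notin T}y_i^n$ and $\left(\prod x_i+\prod y_i\right)^n=\sum_{k=0}^n\binom nk\left(\prod x_i\right)^k\left(\prod y_i\right)^{n-k}$, and writing $m_T=\prod_{i\in T}x_i^n\prod_{i\notin T}y_i^n$ and $g_k=\prod_{i=1}^n x_i^k y_i^{n-k}$, one checks that $\left(\prod x_i\right)^k\left(\prod y_i\right)^{n-k}=g_k$. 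Hence $P=\sum_{k=0}^n D_k$ with $D_k=\sum_{|T|=k}m_T-\binom nk g_k$, and since $S$ is closed under addition it suffices to prove $D_k\in S$ for each $k$. The $\binom nk$ monomials $m_T$ with $|T|=k$ have geometric mean exactly $g_k$ (for each $i$ the exponent $n$ of $x_i$ occurs in $\binom{n-1}{k-1}$ of them, giving average $n\binom{n-1}{k-1}/\binom nk=k$, and likewise $n-k$ for $y_i$), so $D_k\ge 0$ is precisely $\mathrm{AM}\ge\mathrm{GM}$ for these monomials.

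To upgrade this to membership in $S$, I would use the diagonal action of $\mathfrak{S}_n$ on the $n$ pairs $(x_i,y_i)$, with associated Reynolds operator $\mathcal{R}_n$, rather than the full action of $\mathfrak{S}_{2n}$. Each monomial is then recorded by the multiset of exponent pairs $(a_i,b_i)$, and $\sum_{|T|=k}m_T=\binom nk\,\mathcal{R}_n\big(\prod_i x_i^{a_i}y_i^{b_i}\big)$ for the configuration $\mu_k$ having $k$ pairs equal to $(n,0)$ and $n-k$ pairs equal to $(0,n)$, while $\binom nk g_k=\binom nk\,\mathcal{R}_n(g_k)$ for the configuration $\nu_k$ with all pairs equal to $(k,n-k)$. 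The decisive observation is that every pair in $\mu_k$ and in $\nu_k$ has the same total degree $a_i+b_i=n$. I therefore restrict to transfers that preserve each pair's total degree: move one unit of $x$-exponent from a pair $i$ to a pair $j$ while moving one unit of $y$-exponent from $j$ back to $i$. Under such balanced transfers the configuration is governed solely by the $x$-exponent vector, which starts at $(n^k,0^{n-k})$ and must reach $(k^n)$; since $(n^k,0^{n-k})$ majorizes $(k^n)$, the passage $\mu_k\to\nu_k$ can be realized by a finite sequence of these transfers, exactly as the reduction to $\searrow$-steps in Lemma~\ref{Muir}.

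It then remains to realize one balanced transfer as an element of $S$, and here the computation parallels the proof of Lemma~\ref{Muir}. Take two pairs $i,j$ with $a_i+b_i=a_j+b_j$ and $a_i>a_j$, and form the $\mathcal{R}_n$-preimage given by the two orderings of the initial monomial minus the two orderings of the monomial after the transfer. Factoring out the common monomial and setting $u=a_i-a_j=b_j-b_i\ge 1$, this four-term expression reduces to $(x_iy_j-x_jy_i)\big(x_i^{u-1}y_j^{u-1}-x_j^{u-1}y_i^{u-1}\big)$. Writing $X=x_iy_j$ and $Y=x_jy_i$, it equals $(X-Y)(X^{u-1}-Y^{u-1})=(X-Y)^2(X^{u-2}+X^{u-3}Y+\dots+Y^{u-2})$ for $u\ge 2$, a square times a sum of monomials, hence an element of $S$ (for $u=1$ the transfer is a mere transposition and the expression is $0$). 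Multiplying back the common monomial and applying $\mathcal{R}_n$, which preserves $S$ as an average of variable permutations, gives each difference $[\mu]-[\mu']\in S$. Summing the balanced transfers yields $D_k\in S$, and summing over $k$ gives $P\in S$.

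I expect the main obstacle to be identifying the correct elementary move. The telescoping factorization, and hence membership in $S$, works only because the $x$- and $y$-exponents are transferred in tandem so that $u=a_i-a_j$ equals $b_j-b_i$; an unbalanced transfer produces $(x_iy_j-x_jy_i)\big(x_i^{u-1}y_j^{v-1}-x_j^{u-1}y_i^{v-1}\big)$ with $u\ne v$, which need not even be nonnegative. What makes the scheme succeed is that both endpoints $\mu_k$ and $\nu_k$ consist entirely of pairs of total degree $n$, so the total-degree-preserving transfers never leave the regime $u=v$, and the one-dimensional majorization $(n^k,0^{n-k})\succeq(k^n)$ drives the entire argument.
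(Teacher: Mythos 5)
Your proposal is correct, and it follows the paper's first step exactly: expand, group by the number $k$ of $x$-factors, and reduce to showing each $D_k\in S$. Where you genuinely diverge is in certifying $D_k\in S$. The paper replaces the sum over $k$-subsets $I$ by an average over the cyclic shifts $I+t$, $t\in\Z/n\Z$, notes that $\prod_{t=0}^{n-1}z_t=\prod_i x_i^ky_i^{n-k}$ for $z_t=\prod_{i\in I+t}x_i\prod_{j\notin I+t}y_j$, and then simply substitutes the monomials $z_0,\dots,z_{n-1}$ into the AM--GM element $[n,0,\dots,0]-[1,\dots,1]\in S_n$ already supplied by Lemma~\ref{Muir}; since each $z_t$ is a monomial and $S$ is a semiring, the composite lies in $S_{2n}$. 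You instead prove a bespoke ``Muirhead for exponent pairs'' under the diagonal $\mathfrak S_n$-action: your balanced transfers, the factorization $X^u+Y^u-X^{u-1}Y-XY^{u-1}=(X-Y)^2\bigl(X^{u-2}+\dots+Y^{u-2}\bigr)$ with $X=x_iy_j$, $Y=x_jy_i$, and the majorization $(n^k,0^{n-k})\succeq(k^n)$ are all correct, and your identification of \emph{balance} (forcing $u=v$) as the condition that makes the telescoping work is exactly the right observation. The two routes yield different SOS certificates — the paper's squares are $(z_a-z_b)^2$ for monomials attached to cyclic shifts of a fixed $I$, yours are $(x_iy_j-x_jy_i)^2$ times monomials — and they trade off differently: the paper's argument is shorter because it reuses Lemma~\ref{Muir} as a black box via substitution, while yours is self-contained at this step and amounts to a genuine vector-exponent generalization of Muirhead. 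The only point you should make explicit is the existence of a chain of balanced transfers from $\mu_k$ to $\nu_k$ in which every step has $u\ge 2$ (e.g.\ greedily move a unit of $x$-exponent from a currently largest $a_i$ to a currently smallest $a_j$); this is standard, and the paper faces the analogous (easier) issue for $(n,0,\dots,0)\succeq(1,\dots,1)$, but since the paper only \emph{defines} $\succeq$ as the transitive closure of $\searrow$ and does not prove the partial-sum criterion, the chain needs to be exhibited rather than inferred from majorization.
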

This answers a question raised by Andr\'es Caicedo on his teaching blog \cite{C}.

\begin{proof}
We expand $P$ and group terms according to the number of factors $x$ they contain. This gives $$P%(x_1, \ldots, x_n, y_1, \ldots, y_n)
 =\sum_{k=0}^{n} \left( \sum_{\substack{I \subseteq \{1,\ldots,n \} \\ \mid I \mid = k }} \prod_{i\in I}x_i^n \prod_{j\notin I}y_j^n -  \binom{n}{k} \prod_{i=1}^{n} x_i^{k}y_i^{n-k} \right).$$
To facilitate notation, we think of the index set $\{1,\dots,n\}$ as $\Z/n\Z$, the integers modulo $n$.
It will suffice to prove that for all   $k\in \{0, \ldots, n\}$, the polynomial in  large parantheses
%$$\sum_{\substack{I \subseteq \{1,\ldots,n \} \\ \mid I \mid = k }} \prod_{i\in I}x_i^n \prod_{j\notin I}y_j^n -  \binom{n}{k} \prod_{i=1}^{n}
%x_i^{k}y_i^{n-k}$$
 is in $S$. That polynomial equals
%-beli, ebből már $P \in S$ is következne, ugyanis $P$ ezen polinomok összege.
%We have 
\begin{eqnarray*}
%& & \sum_{\substack{I \subset \{1,\ldots,n \} \\ \mid I \mid = k }} \prod_{i\in I}x_i^n \prod_{j\notin I}y_j^n -  \binom{n}{k} \prod_{i=1}^{n}
%x_i^ky_i^{n-k} = \\
%&=
%&
 \sum_{\substack{I \subseteq \Z/n \Z \\ \mid I \mid = k }} \left( \frac{1}{n} \sum_{t=0}^{n-1}   \prod_{i \in I+t}x_i^n
\prod_{j \notin I+t}y_j^n  -\prod_{i=1}^{n}x_i^k y_i^{n-k} \right) = \\
%&
=
%& 
\sum_{\substack{I \subseteq \Z/n \Z \\ \mid I \mid = k }} \left(  \frac{1}{n}\sum_{t=0}^{n-1}  \left( \prod_{i \in I+t}x_i
\prod_{j \notin I+t}y_j\right) ^n - \prod_{t=0}^{n-1} \left( \prod_{i \in I+t}x_i
\prod_{j \notin I+t}y_j \right) \right). \\
\end{eqnarray*}
For a  fixed set $I$ of indices, denote $z_t=\displaystyle \prod_{i \in I+t}x_i \prod_{j \notin I+t}y_j$. Then  $z_t \in S_{2n}$.  Note that   $S_n$   contains the polynomial \[f(x_1,\dots, x_n)=\frac{x_1^n+\ldots + x_n^n}{n} - x_1 \cdots x_n=[n,0,\dots,0]-[1,1,\dots,1]\] by Lemma~\ref{Muir}. Therefore, $S_{2n}$   contains the polynomial 
$$ f(z_0,\dots, z_{n-1})=\frac{1}{n}\sum_{t=0}^{n-1} z_t^n - \prod_{t=0}^{n-1} z_t,$$ whence $P \in S$.
\end{proof}


\begin{thebibliography}{99} 


\bibitem{C} A.\ Caicedo's teaching blog, \newline \tt http://caicedoteaching.wordpress.com/2008/11/11/275-positive-polynomials/
\rm
%\bibitem{bol} B\'ela Bollob\'as: \textit{Modern Graph Theory}, Springer 2002

\bibitem{F}  R.\ Fujisawa, Algebraic means, Proc. Imp. Acad. Volume 1, Number 5 (1918), 159--170.

\bibitem{H} Hurwitz, \"Uber den Vergleich des arithmetischen und des geometrischen Mittels,  in:  Math.\ Werke, 505--507, Basel, E.\  Berkh\"auser, 1933. 
 \end{thebibliography}
\end{document}